%
%
%
%
\documentclass[12pt]{amsart}
\usepackage{amssymb,latexsym,amsmath,amscd,amsthm,graphicx, color}
\usepackage[all]{xy}
\usepackage{pgf,tikz}
\usepackage{mathrsfs}
\usetikzlibrary{arrows}
\definecolor{uuuuuu}{rgb}{0.26666666666666666,0.26666666666666666,0.26666666666666666}
\definecolor{xdxdff}{rgb}{0.49019607843137253,0.49019607843137253,1.}
\definecolor{ffqqqq}{rgb}{1.,0.,0.}

\setlength{\oddsidemargin}{0 in} \setlength{\evensidemargin}{0 in}
\setlength{\textwidth}{6.75 in} \setlength{\topmargin}{-.6 in}
\setlength{\headheight}{.00 in} \setlength{\headsep}{.3 in }
\setlength{\textheight}{10 in} \setlength{\footskip}{0 in}

\newtheorem{theorem}{Theorem}[section]
\newtheorem{lemma}[subsection]{Lemma}

\theoremstyle{definition}
\newtheorem{remark}[subsection]{Remark}
\newtheorem{definition}[subsection]{Definition}
\newtheorem{example}[subsection]{Example}

\theoremstyle{remark}

\numberwithin{equation}{section}














\begin{document}

\title{Effect of the Riemann-Liouville fractional integral on unbounded variation points}


%
\author{S. Verma}
\address{Department of Mathematics, IIT Delhi, New Delhi, India 110016 }
\email{saurabh331146@gmail.com}
\author{Y. S. Liang}
\address{Institute of Science, Nanjing University of Science and Technology, Nanjing 210094, China}
\email{80884903@qq.com}






\subjclass[2010]{Primary 26A30,26A33,28A33.}
\keywords{Box dimension, Bounded variation, Riemann-Liouville fractional integral}

\begin{abstract}
 
This paper targets to study the effect of the Riemann-Liouville fractional integral operator on unbounded variation points of a continuous function. In particular, we show that the fractional integral preserves the bounded variation points of a function. We also prove that the fractional integral operator is a bounded linear operator on the class of bounded variation and continuous functions.
\end{abstract}

\maketitle


.

\section{INTRODUCTION}
Let us start our discussion as follows:
 for a continuous function $f:[a,b] \rightarrow \mathbb{R},$ the function $F(x):= \int_{a}^{x} f(t)dt$ is of bounded variation, and hence $\dim_{B}(Gr(F)) = \dim_{H}(Gr(F))=1,$ where $Gr(F)$ denotes the graph of $F$, 
 $\dim_B(.)$ and $\dim_H(.)$ denote respectively the box dimension and Hausdorff dimension of a set. We cite the books \cite{B} and \cite{PM1} for definitions of dimensions and their properties. Here and throughout the paper, we write $\mathcal{C}([a,b])$ to represent the collection of all continuous functions defined on $[a,b].$
	 Let $f \in \mathcal{C}([a,b])$ and $0<\alpha<1$. Let $\mathfrak{I}_{a}^{\alpha}f(a)=0$, and for $x\in (a,b]$, we define
	\begin{equation}
	\mathfrak{I}_{a}^{\alpha}f(x)=\frac{1}{\Gamma(\alpha)}\int_{a}^{x}(x-t)^{\alpha-1}f(t)dt.
	\end{equation}
	This is the Riemann-Liouville fractional integral of $f$ of order $\alpha$. A natural question arises here about the dimension of fractional integral of a function. There have been several attempts to answer the question partially, see \cite{Liang1,WY,VV1,WD,Z}.
	A major progress towards the problem is achieved by Liang \cite{Liang11} that for any continuous functions whose box dimension is one on $[0, 1],$ Riemann-Liouville fractional integral of these functions of any positive order has been proved to still be $1$-dimensional continuous functions on $[0, 1].$ However, to the best of our knowledge, the existing literature could not serve one complete answer to it. In this paper, we also make a modest contribution aiming to some new advances in the field. 
	\par
	
	If $\alpha=1$, we have $\mathfrak{I}_{a}^{1}f(x)=\int_{a}^{x}f(t)\mathrm{d}t$, and hence  $\dim_{B}(Gr(\mathfrak{I}_{a}^{\alpha}f))=\dim_{H}(Gr(\mathfrak{I}_{a}^{\alpha}f)) =1$.
	For $0<\alpha<1$, Liang \cite{Liang1} proved that if $f \in \mathcal{C}([a,b])$ and is of bounded variation, then we have $$\dim_B (Gr(\mathfrak{I}_{a}^{\alpha}f))=\dim_H (Gr(\mathfrak{I}_{a}^{\alpha}f))=1.$$
Following this result of Liang, authors of \cite{VV1} extend the result to the Katugaompola fractional integral, a generalization of the Riemann-Liouville fractional integral and Hadamard fractional integral. Continuing in the same vein, they further obtain a similar result in the bivariate setting with respect to an appropriate definition of bounded variation, see, for instance, \cite{VV2}. In particular, they establish:
\begin{theorem}
Let $f :[a,b] \times [c,d] \rightarrow \mathbb{R}$ be a continuous and of bounded variation in the sense of Arzel\'a. Then, for $\alpha,\beta>0,$ we have  $$\dim_B(Gr(\mathfrak{I}_{(a,c)}^{(\alpha,\beta)}f))=\dim_H(Gr(\mathfrak{I}_{(a,c)}^{(\alpha,\beta)}f))=2,$$
where $\mathfrak{I}_{(a,c)}^{(\alpha,\beta)}f$ denotes the (mixed) Riemann-Liouville fractional integral of $f$, and is defined as $$ \mathfrak{I}_{(a,c)}^{(\alpha, \beta)}f(x,y)=\frac{1}{\Gamma (\alpha)  \Gamma (\beta)} \int_a ^x \int_c ^y (x-s)^{\alpha-1} (y-t)^{\beta-1}f(s,t)~dsdt .$$
\end{theorem}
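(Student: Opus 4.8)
The strategy is to establish the chain $2\le \dim_H(Gr(g))\le \dim_B(Gr(g))\le 2$, where I abbreviate $g:=\mathfrak{I}_{(a,c)}^{(\alpha,\beta)}f$. The lower bound is automatic: the vertical projection $\pi(x,y,z)=(x,y)$ maps $Gr(g)$ onto $[a,b]\times[c,d]$ and is $1$-Lipschitz, and since Hausdorff dimension does not increase under Lipschitz maps, $\dim_H(Gr(g))\ge \dim_H([a,b]\times[c,d])=2$. Together with the universal inequality $\dim_H\le \dim_B$, the entire theorem reduces to the single upper estimate $\dim_B(Gr(g))\le 2$. As a preliminary step I would check that $g$ is continuous on $[a,b]\times[c,d]$: since $f$ is bounded and the kernel $(x-s)^{\alpha-1}(y-t)^{\beta-1}$ is integrable for $\alpha,\beta>0$, a routine dominated-convergence argument yields continuity (indeed H\"older continuity), so $Gr(g)$ is a genuine compact surface.

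For the upper bound I would use the oscillation form of box counting. Partitioning $[a,b]\times[c,d]$ into squares $Q$ of side $\delta$ (of order $\delta^{-2}$ in number), the number of mesh cubes of side $\delta$ meeting $Gr(g)$ satisfies
\[
N_\delta(Gr(g))\ \le\ \sum_{Q}\Big(1+\frac{\operatorname{osc}(g,Q)}{\delta}\Big)\ \le\ C\delta^{-2}+\frac{1}{\delta}\sum_Q\operatorname{osc}(g,Q),
\]
where $\operatorname{osc}(g,Q)=\sup_Q g-\inf_Q g$. Hence it suffices to prove the oscillation bound $\sum_Q\operatorname{osc}(g,Q)=O(\delta^{-1})$, for then $N_\delta=O(\delta^{-2})$ and $\dim_B(Gr(g))\le 2$. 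The point is that this bound follows once $g$ is shown to be of bounded variation in the sense of Arzel\'a: the grid squares can be organized into $O(\delta^{-1})$ monotone staircase families (anti-diagonal chains), and along each family the telescoped oscillations are controlled by the Arzel\'a variation $M$ of $g$, so that $\sum_Q\operatorname{osc}(g,Q)=O(M\delta^{-1})=O(\delta^{-1})$.

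The heart of the argument is therefore the lemma that the mixed fractional integral preserves bounded variation in the sense of Arzel\'a, i.e. if $f$ is continuous and of bounded variation in the sense of Arzel\'a, then so is $g$. I would prove this by the two-dimensional analogue of the $1$-D device underlying Liang's result: integrating by parts in each variable turns
\[
g(x,y)=\frac{1}{\Gamma(\alpha)\Gamma(\beta)}\int_a^x\!\!\int_c^y (x-s)^{\alpha-1}(y-t)^{\beta-1}f(s,t)\,dt\,ds
\]
into a corner term proportional to $(x-a)^{\alpha}(y-c)^{\beta}f(a,c)$, two edge terms, each a product of a monotone power in one variable and a one-dimensional Stieltjes integral (a function of bounded variation) in the other, and a principal mixed term
\[
\frac{1}{\Gamma(\alpha+1)\Gamma(\beta+1)}\iint_{[a,x]\times[c,y]}(x-s)^{\alpha}(y-t)^{\beta}\,d^2 f(s,t),
\]
where $d^2f$ is the (signed) mixed Lebesgue--Stieltjes measure of $f$. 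The corner and edge terms are manifestly of bounded variation, being built from monotone and one-variable $BV$ factors. For the mixed term I would decompose $d^2f=\nu^+-\nu^-$ into its positive and negative parts and observe that, after encoding the domain of integration by positive parts of the kernel, for a \emph{positive} measure $\nu$ the mixed second difference of $(x,y)\mapsto\iint (x-s)_+^{\alpha}(y-t)_+^{\beta}\,d\nu(s,t)$ equals $\iint \big[\Delta_x (x-s)_+^{\alpha}\big]\big[\Delta_y (y-t)_+^{\beta}\big]\,d\nu(s,t)\ge 0$, since each factor is nondecreasing in its free variable. Thus this function has nonnegative mixed increments and is nondecreasing in each variable, hence of bounded variation; writing $g$ as a finite combination of such monotone-type functions yields the Arzel\'a bound.

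The main obstacle I anticipate is reconciling the two variational notions in play. The box-dimension estimate is cleanest under the Arzel\'a variation, whereas the integration-by-parts representation naturally requires a genuine two-dimensional (Vitali/Hardy--Krause) measure $d^2f$, while a priori the Arzel\'a hypothesis only controls sums along monotone paths and need not produce such a product measure. Bridging this gap — either by verifying that the Arzel\'a class is adequate to run the decomposition (for instance by approximating $f$ by smooth functions and passing to the limit in the variation), or by estimating the Arzel\'a variation of $g$ directly from that of $f$ without invoking $d^2f$ — is where the real work lies. A secondary technical point is the square-oscillation bound in the second step: one must check carefully that $\operatorname{osc}(g,Q)$ over a single $\delta$-square is dominated by increments of $g$ along a monotone path crossing $Q$, so that the grouping into $O(\delta^{-1})$ staircases is legitimate.
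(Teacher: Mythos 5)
Your proposal targets a theorem that this paper does not actually prove: the statement is quoted from the bivariate paper \cite{VV2} of Verma and Viswanathan, and the present paper only establishes the one-dimensional analogues (Theorem \ref{MT1} and its consequences). Measured against the argument of \cite{VV2}, your skeleton is the right one: the lower bound $\dim_H \ge 2$ by Lipschitz projection, the reduction of the whole theorem to $\dim_B \le 2$, the oscillation form of box counting, and the identification of the key lemma that $\mathfrak{I}_{(a,c)}^{(\alpha,\beta)}$ maps continuous Arzel\'a-BV functions into Arzel\'a-BV functions. The genuine gap is exactly the obstacle you flag in your final paragraph, and it is not a peripheral issue but the heart of the proof: your route to the key lemma integrates by parts against the mixed Lebesgue--Stieltjes measure $d^2f$, and such a measure exists only when $f$ has finite Vitali (Hardy--Krause) variation. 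Arzel\'a bounded variation is strictly weaker: for instance $f(x,y)=\phi(x+y)$ is Arzel\'a-BV for any $\phi$ of bounded variation (along a monotone chain the quantity $x+y$ is monotone, so the sums telescope against $V(\phi)$), yet for non-absolutely-continuous $\phi$ the Vitali variation is infinite and no $d^2f$ exists. Your proposed bridge by smooth approximation cannot close this: if mollifications $f_n \to f$ uniformly had uniformly bounded Vitali variation, lower semicontinuity of that variation under uniform convergence would force $f$ itself to be Vitali-BV, contradicting such examples. Your alternative suggestion, ``estimate the Arzel\'a variation of $g$ directly,'' is precisely the unproved statement, so as written the key lemma has no proof.

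The device that closes the gap --- and the one actually used in \cite{VV2}, in exact parallel with this paper's one-dimensional argument --- is a Jordan-type decomposition adapted to the Arzel\'a class: $f$ is of bounded variation in the sense of Arzel\'a if and only if $f=g-h$ where $g$ and $h$ are nondecreasing with respect to the coordinatewise partial order, i.e.\ $g(x',y')\ge g(x,y)$ whenever $x'\ge x$ and $y'\ge y$. This is the two-variable analogue of Theorem \ref{ET2}, and a normalization at the corner $(a,c)$ plays the role of Lemma \ref{ET3}. With it, both halves of your plan become measure-free. For the preservation lemma, one shows that the mixed fractional integral of a suitably normalized coordinatewise nondecreasing function is again coordinatewise nondecreasing, by performing in each coordinate the same change of variables used in the proof of Theorem \ref{MT1}; linearity then decomposes $\mathfrak{I}_{(a,c)}^{(\alpha,\beta)}f$. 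For the box count, the oscillation of a coordinatewise nondecreasing function over a grid square is exactly its increment from the lower-left to the upper-right corner, and these increments telescope along the $O(\delta^{-1})$ diagonal staircases, giving $\sum_Q \operatorname{osc}(g,Q)=O(\delta^{-1})$. This also repairs your ``secondary technical point'': for a non-monotone function the two near-extremal points in a square need not be comparable, so the staircase bookkeeping genuinely fails without the decomposition, whereas with it the issue disappears.
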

Among the various problems associated to the subject of Fractional calculus, an important research problem that received considerable interest is to calculate the box dimension and the Hausdorff dimension of the graph of the fractional integral of a function. The reader is encouraged to see \cite{Liang2,LX,VV1,Z} for researches related to the box dimension of the graph of a continuous function which is not of bounded variation. More recently, the box dimension of the graph of the Hadamard fractional integral of a continuous function of bounded variation and not of bounded variation is investigated in \cite{WD}. On the other hand, the relationship between unbounded variation points of a continuous function and its dimension has also been a topic of research in the intersection of fractal geometry and the notion of bounded variation, see, for instance, \cite{Liang2,Liang4,Z}. In the present paper, we shed some lights on this area of research. 
 \par
 The structure of the paper is as follows. 
In the main section, we prove that the fractional operator preserves bounded variation points of a function. Further, we show that the fractional operator is a bounded linear operator when it is restricted to the class of continuous and bounded variation functions.

\section{Main Results}
First, we equip this section with some definitions related to our concern.
\begin{definition}
Let $ f:[a,b] \rightarrow \mathbb{R}$ be a function. For each partition $ P: a=t_0<t_1<t_2 < \dots <t_n =b$ of the interval $[a,b], $ we define $$V(f,[a,b])= \sup_P \sum_{i=1}^{n} |f(t_i)-f(t_{i-1})|,$$ where the supremum is taken over all partitions $P$ of the interval $[a,b].$\\ If $V(f,[a,b]) < \infty,$ we say that $f$ is of bounded variation. The set of all functions of bounded variation on $[a,b]$ will be denoted by $\mathcal{BV}([a,b])$. Note that the space $\mathcal{BV}([a,b])$ is a Banach space when equipped with the norm $\|f\|_{\mathcal{BV}}:= |f(a)| +V(f,[a,b]).$
\end{definition}
In the sequel, we use the following theorem to prove bounded variation, see, for instance, \cite{Gordon}.
\begin{theorem}\label{ET2}
A function $f$ is of bounded variation on an interval $[a,b]$ if and only if it can be decomposed as a difference of two non-decreasing functions.
\end{theorem}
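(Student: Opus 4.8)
The plan is to prove the two directions of the equivalence separately, with the backward implication being routine and the forward implication hinging on the construction of the total variation function.

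For the easier direction, suppose $f = g - h$ with $g$ and $h$ both non-decreasing on $[a,b]$. For any partition $P: a = t_0 < t_1 < \cdots < t_n = b$, I would apply the triangle inequality to get $|f(t_i) - f(t_{i-1})| \le |g(t_i) - g(t_{i-1})| + |h(t_i) - h(t_{i-1})|$, and then use monotonicity to drop the absolute values so that both sums telescope. This yields $\sum_{i=1}^n |f(t_i) - f(t_{i-1})| \le (g(b) - g(a)) + (h(b) - h(a))$. Since this bound is independent of $P$, taking the supremum shows $V(f,[a,b]) < \infty$, so $f \in \mathcal{BV}([a,b])$.

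For the forward direction, assume $f \in \mathcal{BV}([a,b])$ and define the total variation function $V(x) := V(f,[a,x])$ for $x \in [a,b]$, with $V(a) = 0$. The heart of the argument is to show that both $V$ and $V - f$ are non-decreasing, since then $f = V - (V-f)$ exhibits $f$ as a difference of two non-decreasing functions. To see that $V$ is non-decreasing I would establish the additivity of variation over adjacent subintervals, namely $V(f,[a,y]) = V(f,[a,x]) + V(f,[x,y])$ for $a \le x \le y \le b$; because $V(f,[x,y]) \ge 0$, this gives $V(y) \ge V(x)$ at once. For $V - f$, I would note that for $x < y$ the trivial partition $\{x,y\}$ of $[x,y]$ forces $V(y) - V(x) = V(f,[x,y]) \ge |f(y) - f(x)| \ge f(y) - f(x)$, which rearranges to $(V-f)(y) \ge (V-f)(x)$.

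The main obstacle is the additivity identity $V(f,[a,y]) = V(f,[a,x]) + V(f,[x,y])$. The inequality $\ge$ follows by concatenating any partition of $[a,x]$ with any partition of $[x,y]$ to produce a partition of $[a,y]$ whose variation sum is the sum of the two pieces, and then taking suprema. The reverse inequality $\le$ requires starting from an arbitrary partition of $[a,y]$, inserting the point $x$ if it is not already present (which can only increase the associated variation sum, by the triangle inequality), and then splitting the refined partition at $x$ into partitions of $[a,x]$ and $[x,y]$. Once this identity is in place, the remainder of the argument is purely formal.
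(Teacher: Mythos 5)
Your proposal is correct and complete: the backward direction via telescoping sums and the forward direction via the total variation function $V(x) = V(f,[a,x])$, together with the additivity of variation over adjacent subintervals, is the classical Jordan decomposition argument. Note that the paper itself offers no proof of this statement --- it is quoted as a known result with a citation to Gordon's textbook --- and your argument is essentially the standard proof that the cited reference contains, so there is nothing to compare beyond observing that you have correctly supplied the omitted details.
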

\begin{definition}\label{def1}
Let $f$ be a continuous function on $[a,b]$ and $x_0 \in (a,b)$. The point $x_0$ is said to be a bounded variation point of $f$, if there exists an interval $J=[c,d]$, $a \le c<x_0< d \le b$, such that $f$ is of bounded variation on $J$. Otherwise, $x_0$ is referred to as a point of unbounded variation (obvious modification if $x_0$ is an end point of $[a,b]$).
\end{definition}
\begin{remark}
If $f$ has no unbounded variation points
on $[a, b]$, then $f$ is of bounded variation on $[a, b]$. On
the other hand, if $f$ has at least one unbounded
variation point on $[a, b]$, then $f$ must be of unbounded
variation on $[a, b]$.
\end{remark}
\begin{definition}
Let $f: [a,b] \to \mathbb{R}$ be an integrable function on $[a,b].$ The Riemann-Liouville fractional integral of $f$ of order $\alpha>0$ is defined as $$  \mathfrak{I}_{a}^{\alpha}f(x)=\frac{1}{\Gamma (\alpha)} \int_a ^x (x-t)^{\alpha-1} f(t)~dt.$$
\end{definition}
The reader is encouraged to consult the book \cite{SKM} and the research article \cite{P2} for details of various fractional integrals and their possible geometrical representations.\\
As a prelude, let us note the following.
\begin{lemma}[\cite{VV1}, Lemma $3.4$] \label{ET3}
If $f:[a,b] \rightarrow \mathbb{R}$ is of bounded variation on $[a,b]$, then the following holds:
\begin{enumerate}
\item  If $f(a) \ge 0,$ then there exist non-decreasing functions $g$ and $h$ such that $f=g - h$, $g(a) \ge 0$ and $h(a)=0.$\\
\item  If $f(a) < 0,$ then there exist non-decreasing functions $g$ and $h$ such that $f=g - h$, $g(a) = 0$ and $h(a) >0.$
\end{enumerate}
\end{lemma}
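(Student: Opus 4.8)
The plan is to start from the Jordan-type decomposition guaranteed by Theorem \ref{ET2} and then exploit the non-uniqueness of such a decomposition to arrange the prescribed behaviour at the left endpoint $a$. Since $f$ is of bounded variation, Theorem \ref{ET2} produces non-decreasing functions $g_0, h_0 : [a,b] \to \mathbb{R}$ with $f = g_0 - h_0$. The key observation I would rely on is that for any real constant $c$, the functions $g_0 - c$ and $h_0 - c$ are again non-decreasing and still satisfy $(g_0 - c) - (h_0 - c) = f$. Thus there is a one-parameter family of admissible decompositions, and it only remains to choose $c$ appropriately in each of the two cases. A convenient concrete starting point is $g_0(x) = \tfrac{1}{2}(V(f,[a,x]) + f(x))$ and $h_0(x) = \tfrac{1}{2}(V(f,[a,x]) - f(x))$, but any decomposition coming out of Theorem \ref{ET2} will serve equally well.

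For part (1), I would assume $f(a) \ge 0$ and set $c = h_0(a)$, i.e. define $g := g_0 - h_0(a)$ and $h := h_0 - h_0(a)$. Then $h(a) = 0$ by construction, both $g$ and $h$ remain non-decreasing, $g - h = f$, and $g(a) = g_0(a) - h_0(a) = f(a) \ge 0$, which is exactly what is required.

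For part (2), I would assume $f(a) < 0$ and instead set $c = g_0(a)$, i.e. define $g := g_0 - g_0(a)$ and $h := h_0 - g_0(a)$. Then $g(a) = 0$, both functions are non-decreasing, $g - h = f$, and $h(a) = h_0(a) - g_0(a) = -(g_0(a) - h_0(a)) = -f(a) > 0$, as needed.

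There is no serious obstacle here; the entire content is the remark that a bounded-variation function decomposes as a difference of monotone functions (Theorem \ref{ET2}) and that this decomposition can be shifted by a common constant without affecting either monotonicity or the difference. The only point that requires a moment of attention is to subtract the correct endpoint value, namely $h_0(a)$ in the first case and $g_0(a)$ in the second, so that the normalization lands on the intended summand; the sign condition on the remaining summand then follows automatically from the identity $g_0(a) - h_0(a) = f(a)$.
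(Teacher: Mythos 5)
Your proof is correct. Note that the paper itself gives no proof of this lemma at all: it is imported verbatim from \cite{VV1} (Lemma 3.4 there), so there is nothing in this paper to compare your argument against; your write-up in effect supplies the missing self-contained justification. The argument is sound at every step: Theorem \ref{ET2} (or the explicit choice $g_0(x)=\tfrac12\bigl(V(f,[a,x])+f(x)\bigr)$, $h_0(x)=\tfrac12\bigl(V(f,[a,x])-f(x)\bigr)$, whose monotonicity follows from $|f(y)-f(x)|\le V(f,[x,y])$) gives some decomposition $f=g_0-h_0$ into non-decreasing functions; subtracting a common constant preserves both monotonicity and the difference; and choosing that constant to be $h_0(a)$ in case (1) and $g_0(a)$ in case (2) normalizes the intended summand to vanish at $a$, while the identity $g_0(a)-h_0(a)=f(a)$ then forces $g(a)=f(a)\ge 0$ in case (1) and $h(a)=-f(a)>0$ in case (2). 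This is almost certainly the same elementary shifting argument underlying the cited source, and it uses nothing beyond Theorem \ref{ET2}, which the paper has already stated.
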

In the next theorem, we prove that the bounded variation points of a function are preserved by the fractional integral operator.
\begin{theorem} \label{MT1}
Let $0 < a <b < \infty,$ $\alpha >0,$ and $f:[a,b] \to \mathbb{R}$ be a integrable function.
If $x_0 \in [a,b]$ is a bounded variation point of $f,$ then $x_0$ is also a bounded variation point of $\mathfrak{I}_{a}^{\alpha}f.$
\end{theorem}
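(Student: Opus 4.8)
The plan is to exploit the locality of the bounded variation property together with the fact that the kernel $(x-t)^{\alpha-1}$ is singular only on the diagonal $t=x$. Since $x_0$ is a bounded variation point of $f$, Definition \ref{def1} supplies an interval $[c,d]$ with $a\le c<x_0<d\le b$ on which $f$ is of bounded variation. It suffices to produce a (possibly smaller) interval around $x_0$ on which $\mathfrak{I}_a^\alpha f$ is of bounded variation, so I fix any $c'$ with $c<c'<x_0$ and aim to show $\mathfrak{I}_a^\alpha f\in\mathcal{BV}([c',d])$, which is exactly what the definition demands.

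For $x\in[c',d]$ I would split the defining integral at $c$:
\[
\mathfrak{I}_a^\alpha f(x)=\frac{1}{\Gamma(\alpha)}\int_a^c(x-t)^{\alpha-1}f(t)\,dt+\frac{1}{\Gamma(\alpha)}\int_c^x(x-t)^{\alpha-1}f(t)\,dt=:g_1(x)+g_2(x).
\]
The term $g_1$ only involves $t\in[a,c]$, where $x-t\ge c'-c>0$, so the integrand stays uniformly away from the singularity. Because $u\mapsto u^{\alpha-1}$ is Lipschitz on the compact interval $[c'-c,\,d-a]$ and $f\in L^1([a,c])$, I expect to estimate $|g_1(x_2)-g_1(x_1)|\le L|x_2-x_1|$ by pulling the kernel difference out of the integral. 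Hence $g_1$ is Lipschitz, and in particular of bounded variation, on $[c',d]$.

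The heart of the matter, and the main obstacle, is the second term $g_2(x)=\mathfrak{I}_c^\alpha(f|_{[c,d]})(x)$: I must show that the fractional integral of a function of bounded variation is again of bounded variation. I would first establish the special case that if $\phi$ is non-negative and non-decreasing on $[c,d]$, then $\mathfrak{I}_c^\alpha\phi$ is non-decreasing. After the substitution $s=x-t$,
\[
\mathfrak{I}_c^\alpha\phi(x)=\frac{1}{\Gamma(\alpha)}\int_0^{x-c}s^{\alpha-1}\phi(x-s)\,ds,
\]
and for $x_1<x_2$ both the interval of integration lengthens and the integrand grows, since $\phi(x_2-s)\ge\phi(x_1-s)\ge 0$; monotonicity then follows. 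To reach general $f$, I would invoke Theorem \ref{ET2} together with Lemma \ref{ET3} to write $f|_{[c,d]}=\phi_1-\phi_2$ with $\phi_1,\phi_2$ non-decreasing and non-negative, and use linearity of $\mathfrak{I}_c^\alpha$ to get $g_2=\mathfrak{I}_c^\alpha\phi_1-\mathfrak{I}_c^\alpha\phi_2$, a difference of non-decreasing functions and therefore of bounded variation on $[c,d]\supseteq[c',d]$.

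Combining the two pieces, $\mathfrak{I}_a^\alpha f=g_1+g_2$ is of bounded variation on $[c',d]$, an interval containing $x_0$ in its interior, so $x_0$ is a bounded variation point of $\mathfrak{I}_a^\alpha f$. The endpoint cases $x_0=a$ and $x_0=b$ follow from the same argument with the obvious one-sided modification; indeed, when $c=a$ the term $g_1$ disappears and the conclusion is immediate. The only genuinely delicate point is the monotonicity lemma governing $g_2$, while the control of $g_1$ is routine estimation.
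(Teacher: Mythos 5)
Your proposal is correct, and its core coincides with the paper's own argument: both proofs reduce the problem to showing that the Riemann--Liouville integral, based at the left endpoint of the interval on which $f$ has bounded variation, of a non-negative non-decreasing function is again non-decreasing (your substitution $s=x-t$ computation is exactly the paper's change-of-variable estimate for $G(y)-G(x)$), and both then invoke Lemma \ref{ET3} to obtain a decomposition of $f$ into monotone pieces that are non-negative, hence covered by that monotonicity lemma. Where you genuinely diverge is in the treatment of the non-local tail $\frac{1}{\Gamma(\alpha)}\int_a^{c}(x-t)^{\alpha-1}f(t)\,dt$. The paper absorbs this tail into one of the monotone pieces, writing $\mathfrak{I}_a^{\alpha}f = G - \bigl(H - \text{tail}\bigr)$ and arguing that $H-\text{tail}$ is non-decreasing; but that step relies on the reduction ``without loss of generality $f\ge 0$ on $[a,a_0]$,'' which is not justified there, and on the inequality $(x-t)^{\alpha-1}\ge (y-t)^{\alpha-1}$ for $x\le y$, which holds only when $\alpha\le 1$, even though the theorem is stated for all $\alpha>0$. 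Your alternative --- shrink the interval to $[c',d]$, observe that for $t\in[a,c]$ and $x\in[c',d]$ one has $x-t\ge c'-c>0$ so the kernel is uniformly Lipschitz in $x$, and conclude that the tail is Lipschitz and hence of bounded variation --- works for every $\alpha>0$, needs no sign normalization of $f$, and costs only the harmless passage from $[c,d]$ to $[c',d]$. So your proof is not merely a variant: at the paper's weakest point it is the tighter argument. The one hypothesis you should make explicit when writing it up is that Lemma \ref{ET3} indeed yields \emph{non-negative} monotone pieces in both sign cases (a non-decreasing function that is non-negative at the left endpoint is non-negative throughout), since your monotonicity lemma uses $\phi\ge 0$ to control the lengthened interval of integration.
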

\begin{proof}
By Definition \ref{def1}, it follows that there exists an interval $J=[a_0,b_0]$ such that $x_0 \in [a_0,b_0],$ and $f$ is of bounded variation on $J.$ Now, our first task is to show that $\mathfrak{I}_{a_0}^{\alpha}f$ can be written as a difference of two non-decreasing functions defined on $J.$  
Since $f$ is of bounded variation on $J,$  there exist non-decreasing functions $g,h:J \to \mathbb{R}$ such that $f(x)=g(x)-h(x)$ for every $x \in J.$ Without loss of generality, we assume $f(a_0) \ge 0.$ With the help of Lemma \ref{ET3}, one can also assume that $g(a_0) \ge 0$ and $h(a_0) = 0.$ From the linearity of fractional integral, it is evident that $\mathfrak{I}_{a_0}^{\alpha}f(x)=G(x)-H(x),$  where $G(x):=~~ \mathfrak{I}_{a_0}^{\alpha}g(x)$ and $H(x):=~~ \mathfrak{I}_{a_0}^{\alpha}h(x).$ We claim that $G$ and $H$ are non-decreasing functions. Now, let $a\le x \le y \le b,$
\begin{equation*}
    \begin{aligned}
     G(y)-G(x)=&\mathfrak{I}_{a_0}^{\alpha}g(y)- \mathfrak{I}_{a_0}^{\alpha}g(x)\\=& \frac{1}{\Gamma (\alpha)} \int_{a_0} ^{y} (y-t)^{\alpha-1} g(t)dt - \frac{1}{\Gamma(\alpha)} \int_{a_0} ^x (x -t)^{\alpha-1} g(t)dt.
      \end{aligned}
               \end{equation*}
Applying a change of variable $x-t=y-u$ in the second integral, we have 
\begin{equation*}
    \begin{aligned}
     G(y)-G(x)=&~ \frac{1}{\Gamma (\alpha)} \int_{a_0}^{y-x+a} (y-t)^{\alpha-1} g(t)dt+~ \frac{1}{\Gamma(\alpha)} \int_{y-x+a_0} ^y (y-t)^{\alpha-1}  \Big[g(t)-
     g(t+x-y)\Big]dt.
      \end{aligned}
               \end{equation*}
 Since $ t+x-y \le t, $ $ g(a_0)\ge 0$ and $g$ is non-decreasing, it is immediate that all terms under the integration signs are non-negative. Therefore, $G(y)-G(x) \ge 0,$ that is, $G$ is a non-decreasing function. Similarly, we establish that $H$ is also a non-decreasing function. If $f(a_0)<0,$ then by Lemma \ref{ET3}, we choose non-decreasing functions $g$, $h:J \to \mathbb{R}$ such that $g(a_0)=0$ and $h(a_0)>0,$  and the rest of the proof for the claim follow on similar lines. Further, without loss of generality, we assume that $f(x) \ge 0 $ for every $x \in [a,a_0].$ Then, using the fact that 
 \[
 \mathfrak{I}_{a}^{\alpha}f(x) = \frac{1}{\Gamma (\alpha)} \int_{a}^{a_0} (x-t)^{\alpha-1} f(t)dt+ \mathfrak{I}_{a_0}^{\alpha}f(x) = G(x) - \Big(H(x) - \frac{1}{\Gamma (\alpha)} \int_{a}^{a_0} (x-t)^{\alpha-1} f(t)dt\Big),
 \]
 for $a_0 \le x \le y$, we have $G(y) -G(x) \ge 0$ and
 \begin{equation*}
 \begin{aligned}
  & \Big(H(y)  - \frac{1}{\Gamma (\alpha)} \int_{a}^{a_0} (y-t)^{\alpha-1} f(t)dt\Big)- \Big(H(x) - \frac{1}{\Gamma (\alpha)} \int_{a}^{a_0} (x-t)^{\alpha-1} f(t)dt\Big) \\ &= H(y)- H(x)+ \frac{1}{\Gamma (\alpha)} \int_{a}^{a_0} \Big((x-t)^{\alpha-1} - (y-t)^{\alpha-1}\Big)f(t)dt\ge 0.
  \end{aligned}
 \end{equation*}
 That is, $\mathfrak{I}_{a}^{\alpha}f$ can be written as a difference of two non-decreasing functions on $J.$ 
 Thus, the proof of the theorem is complete.
 \end{proof}
 Further, we introduce an interesting result as follows: 
 \begin{theorem}
 The fractional operator $\mathfrak{I}_{a}^{\alpha}: \mathcal{BV}([a,b]) \cap \mathcal{C}([a,b]) \to \mathcal{BV}([a,b])$ is a bounded linear operator.
 \end{theorem}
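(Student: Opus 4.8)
The plan is to establish the two defining properties separately: linearity, which is immediate, and boundedness, which is the substantive part. Linearity of $\mathfrak{I}_a^\alpha$ follows at once from linearity of the integral in the integrand, so that $\mathfrak{I}_a^\alpha(\lambda f + \mu k) = \lambda\,\mathfrak{I}_a^\alpha f + \mu\,\mathfrak{I}_a^\alpha k$ for scalars $\lambda,\mu$ and $f,k \in \mathcal{BV}([a,b]) \cap \mathcal{C}([a,b])$. For boundedness I must produce a constant $C = C(\alpha,a,b)$ with $\|\mathfrak{I}_a^\alpha f\|_{\mathcal{BV}} \le C\,\|f\|_{\mathcal{BV}}$. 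Since $\mathfrak{I}_a^\alpha f(a) = 0$, the norm on the left reduces to the total variation $V(\mathfrak{I}_a^\alpha f,[a,b])$, so the whole task is to bound this variation by $\|f\|_{\mathcal{BV}} = |f(a)| + V(f,[a,b])$.

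Next I would decompose $f$ and reuse the monotonicity computation from the proof of Theorem \ref{MT1}. Assume first $f(a) \ge 0$. Writing $p$ and $n$ for the positive- and negative-variation functions of $f$ on $[a,x]$, set $g := f(a) + p$ and $h := n$; these are non-decreasing, satisfy $g(a) = f(a) \ge 0$ and $h(a) = 0$ (so they are of the type furnished by Lemma \ref{ET3}), and give $f = g - h$ together with the key identity $g(b) + h(b) = f(a) + \big(p(b)+n(b)\big) = |f(a)| + V(f,[a,b]) = \|f\|_{\mathcal{BV}}$. By the argument in the proof of Theorem \ref{MT1} (applied with $a_0 = a$), both $G := \mathfrak{I}_a^\alpha g$ and $H := \mathfrak{I}_a^\alpha h$ are non-decreasing, and $G(a) = H(a) = 0$. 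Hence $\mathfrak{I}_a^\alpha f = G - H$ is a difference of non-decreasing functions and
\[
V(\mathfrak{I}_a^\alpha f,[a,b]) \le V(G,[a,b]) + V(H,[a,b]) = G(b) + H(b).
\]
The case $f(a) < 0$ is handled identically, taking $g := p$ and $h := |f(a)| + n$ via part (2) of Lemma \ref{ET3}, again yielding $g(b) + h(b) = \|f\|_{\mathcal{BV}}$.

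Finally I would estimate $G(b)$ and $H(b)$ using monotonicity of $g$ and $h$. Because $0 \le g(t) \le g(b)$ on $[a,b]$,
\[
G(b) = \frac{1}{\Gamma(\alpha)} \int_a^b (b-t)^{\alpha-1} g(t)\,dt \le \frac{g(b)}{\Gamma(\alpha)} \int_a^b (b-t)^{\alpha-1}\,dt = \frac{(b-a)^\alpha}{\Gamma(\alpha+1)}\,g(b),
\]
and likewise $H(b) \le \frac{(b-a)^\alpha}{\Gamma(\alpha+1)}\,h(b)$. Combining with the previous display and the identity for $g(b)+h(b)$ gives $\|\mathfrak{I}_a^\alpha f\|_{\mathcal{BV}} \le \frac{(b-a)^\alpha}{\Gamma(\alpha+1)}\|f\|_{\mathcal{BV}}$, so $\mathfrak{I}_a^\alpha$ is bounded with operator norm at most $(b-a)^\alpha/\Gamma(\alpha+1)$ (which correctly specializes to $b-a$ at $\alpha = 1$). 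The main obstacle is not any single estimate but the bookkeeping of the decomposition: one must choose $g,h$ so that simultaneously $\mathfrak{I}_a^\alpha g,\ \mathfrak{I}_a^\alpha h$ remain non-decreasing (forcing $g(a),h(a)\ge 0$, which is why Lemma \ref{ET3} is invoked) \emph{and} the boundary sum $g(b)+h(b)$ is controlled by $\|f\|_{\mathcal{BV}}$ rather than by some larger, decomposition-dependent quantity; the canonical Jordan decomposition is exactly what reconciles these two requirements.
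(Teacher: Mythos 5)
Your proof is correct, and it takes a genuinely different route from the paper's. The paper works directly with an arbitrary partition $a=x_0<x_1<\dots<x_N=b$: it applies the change-of-variable identity from the proof of Theorem \ref{MT1} to each increment $\mathfrak{I}_a^\alpha f(x_i)-\mathfrak{I}_a^\alpha f(x_{i-1})$, bounds the first resulting sum by $\frac{\|f\|_\infty}{\Gamma(\alpha+1)}V(g,[a,b])$ with $g(x)=(x-a)^\alpha$ (a telescoping sum), and bounds the second by picking points $t_i$ at which $\sup_t|f(t)-f(t+x_{i-1}-x_i)|$ is attained --- this is the only place continuity of $f$ enters --- arriving at the constant $2\max\{V(g,[a,b]),(b-a)^\alpha\}/\Gamma(\alpha+1)$. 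You instead recycle the monotonicity argument of Theorem \ref{MT1} itself: with the Jordan decomposition normalized as in Lemma \ref{ET3}, you get $\mathfrak{I}_a^\alpha f=G-H$ with $G,H$ non-decreasing and vanishing at $a$, so the variation collapses to $G(b)+H(b)$, which you estimate pointwise. Your route buys three things. First, a cleaner and sharper constant $(b-a)^\alpha/\Gamma(\alpha+1)$, a factor of $2$ better than the paper's. Second, continuity of $f$ is never used, so your argument actually proves boundedness on all of $\mathcal{BV}([a,b])$, not merely on $\mathcal{BV}([a,b])\cap\mathcal{C}([a,b])$. Third, and most substantively, you sidestep the paper's most delicate step, the inequality $\sum_{i=1}^N|f(t_i)-f(t_i+x_{i-1}-x_i)|\le V(f,[a,b])$: the pairs $\{t_i+x_{i-1}-x_i,\,t_i\}$ need not be ordered across $i$ (the windows $[t_i+x_{i-1}-x_i,\,t_i]$ can overlap), so this is not a variation sum over a partition, and for a function whose variation sits in a single short ramp roughly half of the suprema can equal $V(f,[a,b])$, making the sum exceed the total variation; the paper's estimate therefore needs repair, and your argument supplies a proof that does not rely on it. The only points worth making explicit in your write-up are standard: the subadditivity $V(G-H,[a,b])\le V(G,[a,b])+V(H,[a,b])$, and the fact that a non-decreasing $G$ with $G(a)=0$ has $V(G,[a,b])=G(b)$.
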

 \begin{proof}
 By Theorem \ref{MT1}, the operator $\mathfrak{I}_{a}^{\alpha}$ is well-defined. Obvious also that $\mathfrak{I}_{a}^{\alpha}$ is a linear operator. Now, it remains to show that the operator is bounded. For this,
 let $P: a=x_0<x_1<x_2 < \dots <x_N =b$ be a partition of $[a,b].$ Then, we have
 \begin{equation*}
     \begin{aligned}
      \mathfrak{I}_{a}^{\alpha}f(x_i)- \mathfrak{I}_{a}^{\alpha}f(x_{i-1})= &~ \frac{1}{\Gamma (\alpha)} \int_{a}^{x_i-x_{i-1}+a} (x_i-t)^{\alpha-1} f(t)dt \\ & + ~ \frac{1}{\Gamma(\alpha)} \int_{x_i-x_{i-1}+a} ^{x_i} (x_i-t)^{\alpha-1}  \Big[f(t)-
      f(t+x_{i-1}-x_i)\Big]dt.
     \end{aligned}
 \end{equation*}
 Consequently, 
 \begin{equation*}
      \begin{aligned}
     \sum_{i=1}^{N} \Big| \mathfrak{I}_{a}^{\alpha}f(x_i)- \mathfrak{I}_{a}^{\alpha}f(x_{i-1})\Big| \le  &~ \frac{\|f\|_{\infty}}{\Gamma (\alpha)} \sum_{i=1}^{N} \Big| \int_{a}^{x_i-x_{i-1}+a} (x_i-t)^{\alpha-1}dt \Big| \\ & + ~ \sum_{i=1}^{N} \frac{\big|f(t_i)-
            f(t_i+x_{i-1}-x_i)\big|}{\Gamma(\alpha)} \Big| \int_{x_i-x_{i-1}+a} ^{x_i} (x_i-t)^{\alpha-1}  dt \Big|\\
      \le  &~ \frac{\|f\|_{\infty}}{\Gamma (\alpha+1)}\sum_{i=1}^{N} \Big[  (x_i-a)^{\alpha} - (x_{i-1}-a)^{\alpha} \Big] \\ & + ~ \sum_{i=1}^{N} \frac{\big|f(t_i)-
                  f(t_i+x_{i-1}-x_i)\big|}{\Gamma(\alpha+1)}  (x_{i-1}-a)^{\alpha} \\ \le  &~ \frac{\|f\|_{\infty}}{\Gamma (\alpha+1)}\sum_{i=1}^{N} \Big[  (x_i-a)^{\alpha} - (x_{i-1}-a)^{\alpha} \Big] \\ & + ~ \frac{(b-a)^{\alpha}}{\Gamma(\alpha+1)} \sum_{i=1}^{N} \big|f(t_i)-
                     f(t_i+x_{i-1}-x_i)\big|\\ \le  &~ \frac{\|f\|_{\infty}}{\Gamma (\alpha+1)} V(g,[a,b])  + ~ \frac{(b-a)^{\alpha}}{\Gamma(\alpha+1)} V(f,[a,b])   
                     \\ \le  &~ 2 \max \Big\{\frac{V(g,[a,b])}{\Gamma (\alpha+1)} ,\frac{(b-a)^{\alpha}}{\Gamma(\alpha+1)}\Big\} ~~ \|f\|_{\mathcal{BV}},                
      \end{aligned}
  \end{equation*}
  in the above $g(x)=(x-a)^{\alpha},$ $\|f\|_{\infty}\le \|f\|_{\mathcal{BV}},$ and 
  \[
  \big|f(t_i)- f(t_i+x_{i-1}-x_i)\big|=\sup_{  t \in [x_i -x_{i-1}+a,~ x_i]} \big|f(t)- f(t+x_{i-1}-x_i)\big|,
  \]
  are used. Note that existence of $t_i$ is immediate from the compactness of interval $[x_i -x_{i-1}+a,x_i]$ and continuity of the function $f.$
   Since the previous expression holds for arbitrary partition, we have 
  \[
  \| \mathfrak{I}_{a}^{\alpha}f \|_{\mathcal{BV}} \le 2 \max \Big\{\frac{V(g,[a,b])}{\Gamma (\alpha+1)} ,\frac{(b-a)^{\alpha}}{\Gamma(\alpha+1)}\Big\} ~~ \|f\|_{\mathcal{BV}},
  \]
  establishing the proof.
 \end{proof}
 Let us write the semigroup property of the fractional integral for an integrable function $f:[a,b] \to \mathbb{R}$, see \cite{SKM} for more details, namely,
  $$\mathfrak{I}_{a}^{\alpha} ~~ ~~\mathfrak{I}_{a}^{\beta}f =~~~ \mathfrak{I}_{a}^{\alpha+\beta}f.$$
  	Recall that for a given integrable function $f:[a,b] \to \mathbb{R}$, the function $F:[a,b] \to 
  \mathbb{R}$ defined by $F(x)= \int_{a}^{x} f(t) dt,$ is of bounded variation on $[a,b].$ That is, $F$ has no unbounded variation points. \\ 
  Now, we are ready to prove our main result.
 \begin{theorem}
 Let $f \in \mathcal{C}([a,b]),$ and $ 0 < a < b < \infty .$
 \begin{enumerate}
 \item  If $ 0< \alpha < 1,$ then $\mathfrak{I}_{a}^{\alpha}f$ has the number of unbounded variation points at most equal to the number of unbounded variation points of $f.$  
  \item If $  \alpha \ge 1,$ then there is no unbounded variation points of $\mathfrak{I}_{a}^{\alpha}f.$
 \end{enumerate}
 \end{theorem}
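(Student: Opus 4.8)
The plan is to reduce both parts to Theorem \ref{MT1}, which asserts that the fractional integral $\mathfrak{I}_{a}^{\alpha}$ carries bounded variation points of an integrable function to bounded variation points of its fractional integral. Reading this theorem contrapositively is the engine for part (1), while part (2) additionally exploits the semigroup identity $\mathfrak{I}_{a}^{\alpha-1}\mathfrak{I}_{a}^{1}f=\mathfrak{I}_{a}^{\alpha}f$ together with the classical fact that $\mathfrak{I}_{a}^{1}f(x)=\int_{a}^{x}f(t)\,dt$ is of bounded variation on $[a,b]$.

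For part (1), fix $0<\alpha<1$ and suppose $x_0$ is an unbounded variation point of $\mathfrak{I}_{a}^{\alpha}f$. If $x_0$ were a bounded variation point of $f$, then Theorem \ref{MT1} would force $x_0$ to be a bounded variation point of $\mathfrak{I}_{a}^{\alpha}f$, a contradiction. Hence every unbounded variation point of $\mathfrak{I}_{a}^{\alpha}f$ is an unbounded variation point of $f$; equivalently, the set of unbounded variation points of $\mathfrak{I}_{a}^{\alpha}f$ is contained in the corresponding set for $f$. Passing to cardinalities yields the claimed inequality. Note that this argument uses only $\alpha>0$, so the inclusion of unbounded-variation-point sets in fact holds for every positive order.

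For part (2), let $\alpha\ge 1$ and set $F:=\mathfrak{I}_{a}^{1}f$, so that $F(x)=\int_{a}^{x}f(t)\,dt$. Since $f$ is continuous, $F$ is continuous, integrable, and of bounded variation on $[a,b]$; in particular every point of $[a,b]$ is a bounded variation point of $F$. If $\alpha=1$ we are already done, since $\mathfrak{I}_{a}^{\alpha}f=F$ has no unbounded variation points. If $\alpha>1$, the semigroup property gives $\mathfrak{I}_{a}^{\alpha}f=\mathfrak{I}_{a}^{\alpha-1}F$ with $\alpha-1>0$, and Theorem \ref{MT1} applied to the integrable function $F$ sends every point of $[a,b]$, each being a bounded variation point of $F$, to a bounded variation point of $\mathfrak{I}_{a}^{\alpha-1}F=\mathfrak{I}_{a}^{\alpha}f$. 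Thus $\mathfrak{I}_{a}^{\alpha}f$ has no unbounded variation points.

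I do not expect a genuine obstacle here, since Theorem \ref{MT1} already carries all the analytic content and the remaining work is essentially logical. The two points requiring a little care are verifying that $F$ is integrable and of bounded variation so that Theorem \ref{MT1} legitimately applies, both immediate from continuity of $f$ and the standard estimate for $\int_{a}^{x}f$, and the index bookkeeping $\alpha-1>0$ in the semigroup splitting, so that one does not accidentally invoke the degenerate order $\mathfrak{I}_{a}^{0}$. The conceptual heart of the result is simply the observation that preserving bounded variation points is exactly the statement that no unbounded variation points are created, which converts Theorem \ref{MT1} into a monotonicity statement for the unbounded-variation-point set under $\mathfrak{I}_{a}^{\alpha}$.
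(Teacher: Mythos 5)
Your proposal is correct and follows essentially the same route as the paper: part (1) is the contrapositive reading of Theorem \ref{MT1}, and part (2) combines the semigroup property $\mathfrak{I}_{a}^{\alpha}f=\mathfrak{I}_{a}^{\alpha-1}\mathfrak{I}_{a}^{1}f$ with the fact that $\mathfrak{I}_{a}^{1}f$ is of bounded variation, then applies Theorem \ref{MT1} again. The paper's own proof is just a terser version of this (it compresses part (2) into the single phrase ``the semigroup property yields the second part''), so your added care with the $\alpha=1$ boundary case and the integrability of $\mathfrak{I}_{a}^{1}f$ only makes the argument more complete.
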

 \begin{proof}
 Let $ 0< \alpha < 1$ and $x_0 \in [a,b]$. In view of Theorem \ref{MT1}, the point $x_0$ will be a bounded variation point of $\mathfrak{I}_{a}^{\alpha}f$ whenever it is so of $f.$ Hence, the first part of the theorem is established. Further, the semigroup property yields the second part.  
 \end{proof}
 Let us now give an example of a function which has one unbounded variation point.
 \begin{example}
 Let $f:[0,1] \to \mathbb{R}$ be a function defined by 
 \begin{equation*}
  f(x)= \begin{cases}
    \sin(\frac{1}{x}), \quad~\text{if}~~0<x\le 1\\
       0, ~\quad \text{if}~~~~x = 0.
  \end{cases}
  \end{equation*}
  Note that $f$ is integrable and not of bounded variation on $[0,1].$ To be precise, $0$ is an unbounded variation point of $f.$ Now, for $\alpha \ge 1,$  $\mathfrak{I}_{0}^{\alpha}f$ has no unbounded variation points.
 \end{example}
\subsection*{Acknowledgements}
   The first author expresses his gratitude to the University Grants
   Commission (UGC), India, for financial support.
\bibliographystyle{amsplain}

\begin{thebibliography}{10}


\bibitem{B} M. F. Barnsley, Fractals Everywhere, 2nd edition, Academic Press, Boston, 1993.
\bibitem {Gordon}  R. A. Gordon, Real Analysis: A First Course, 2nd edition, Boston, Pearson Education Inc., 2002.


\bibitem {Liang11} Y. S. Liang, Fractal dimension of Riemann-Liouville fractional integral of $1$-dimensional continuous functions, Fractional Calculus and Applied Analysis, 21(6) (2019) 1651-1658.

    \bibitem {Liang2} Y. S. Liang, Definition and classification of one-dimensional continuous functions with unbounded variation, Fractals 25 (2017), 12 pages.
    
            \bibitem {Liang4} Y. S. Liang, Some remarks on continuous functions of unbounded variation, Acta Math. Sin. 59 (2016) 215-232.
    
\bibitem {Liang1} Y. S. Liang, Box dimensions of Riemann-Liouville fractional integrals of continuous functions of bounded variation, Nonlin. Anal. 72 (2010) 4304-4306.


    \bibitem {LX} Y. Li and W. Xiao, Fractal dimension of Riemann-Liouville fractional integral of certain unbounded variational continuous function, Fractals 25 (2017), 7 pages.
    \bibitem {PM1} P. R. Massopust, Fractal Functions, Fractal Surfaces, and Wavelets, 2nd ed., Academic Press, San Diego, 2016.
 \bibitem{P2} I. Podlubny, Geometric and physical
interpretation of fractional integration
and fractional differentiation, Fract.
Calc. Appl. Anal. 5 (2002) 367-386.



\bibitem{SKM} S. G. Samko, A. A. Kilbas, O. I. Marichev, Fractional Integrals and Derivatives, Theory and Applications, Gordon and Breach, Yverdon et alibi, 1993.
\bibitem {WY} J. Wang, K. Yao, Dimension analysis of continuous functions of unbounded variation, Fractals, 25 (2017), 6 pages.
\bibitem{VV2} S. Verma, P. Viswanathan, Bivariate functions of bounded variation: Fractal dimension and fractional integral,
Indagationes Mathematicae 31 (2020) 294-309.

\bibitem{VV1} S. Verma, P. Viswanathan, A note on Katugampola fractional calculus and fractal dimensions, Applied Mathematics
and Computation 339 (2018), 220-230.
\bibitem {WD} X.E. Wu, J.H. Du, Box dimension of Hadamard fractional integral of continuous functions of bounded and unbounded variation, Fractals, 25 (2017), 7 pages.
\bibitem {Z} Q. Zhang, Some remarks on one-dimensional functions and their Riemann-Liouville fractional calculus, Acta Math. Sin. 30 (2014) 517-524.


\end{thebibliography}

\end{document}